\documentclass[american]{amsart}
\usepackage{amsmath, amsfonts, amssymb, amscd, amsthm, amsbsy, esint, bbm, epsf, mathtools}
\usepackage[utf8]{inputenc}

\usepackage{datetime}
\usepackage[margin=1in]{geometry}
\usepackage{comment}

\mathtoolsset{showonlyrefs}

\usepackage[colorlinks,linkcolor = blue,citecolor = green]{hyperref}


\newtheorem{theorem}{Theorem}[section]

\newtheorem{corollary}{Corollary}
\newtheorem{lemma}{Lemma}[section]
\theoremstyle{definition}
\newtheorem{definition}{Definition}
\newtheorem{remark}{Remark}

\numberwithin{equation}{section}

\DeclareMathOperator{\R}{\mathbb{R}}

\DeclareMathOperator{\bd}{\partial}


\DeclareMathOperator{\embeds}{\hookrightarrow}

\DeclareMathOperator*{\essinf}{ess\,inf}

\DeclareMathOperator{\eps}{\varepsilon}

\newcommand{\norm}[1]{\left|\left| #1 \right|\right|}
\newcommand{\abs}[1]{\left| #1 \right|}
\newcommand{\set}[1]{\left\{ #1 \right\}}

\setcounter{tocdepth}{1}
\title{Nonlinear Asymptotic Stability in $L^\infty$ for Lipschitz Solutions to Scalar Conservation Laws}
\author{William Golding}
\address[William Golding]{\newline Department of Mathematics, \newline The University of Texas at Austin, Austin, TX 78712, USA}
\email{wgolding@utexas.edu}

\date{\today}
\subjclass{35B40, 35B35, 35B65, 35L65}
\keywords{De Giorgi method, Stability, Scalar conservation law, Kinetic formulation, Burgers equation, Genuinely nonlinear}

\thanks{\textbf{Acknowledgments:} The author would like to thank his advisor, Alexis Vasseur, for several helpful conversations and for continued encouragement and support.}
\thanks{\textbf{Funding:} W. Golding is partially supported by NSF grant DMS 1840314.}

\begin{document}

\begin{abstract}
In this note, we show nonlinear stability in $L^\infty$ for Lipschitz solutions to genuinely nonlinear, multi-dimensional scalar conservation laws. As an application, we are able to compute algebraic decay rates of the $L^\infty$ norm of perturbations of global-in-time Lipschitz solutions, including perturbations of planar rarefaction waves. 
Our analysis uses the De Giorgi method applied to the kinetic formulation and is an extension of the method introduced recently by Silvestre in [\textit{Comm. Pure Appl. Math}, 72(6):1321–1348, 2019].
\end{abstract}

\maketitle

\tableofcontents

\section{Introduction}
In this note, we study multi-dimensional scalar conservation laws of the form: 
\begin{equation}\label{eqn:scalar}
\partial_t u + \nabla_x \cdot A(u) = 0,
\end{equation}
where $(t,x)\in \R^+ \times \R^n$ are time and space, respectively, and $u: \R^+ \times \R^n \rightarrow \R$ is the unknown conserved quantity with flux $A: \R \rightarrow \R^n$. We only consider solutions $u$ which satisfy \eqref{eqn:scalar} in the sense of distributions; are further bounded in that $u(t,x) \in I = [-\|u\|_{L^\infty},\|u\|_{L^\infty}]$; and are entropic in the sense that
\begin{equation}\label{eqn:entropy}
\partial_t\abs{u - k} + \nabla_x \cdot \left[ \mathrm{sgn}(u-k)(A(u)- A(k)) \right]\le 0,
\end{equation}
in the sense of distributions, for each $k \in I$. Throughout the following, we will assume a standard genuine nonlinearity condition on the flux, namely:
\begin{definition}\label{defn:genuine_nonlinearity}
We say a flux $A$ is {\bf genuinely nonlinear} (on an interval $I = [a,b]$) with parameters $C_0, \ C_1 > 0$ and $\alpha \in (0,1]$ if $a = \nabla \cdot A \in C^1(I)$ with $\|a\|_{L^\infty(I)} \le C_1$ and
\begin{equation}\label{eqn:nondegeneracy}
\abs{\set{v \in I \mid |t + a(v)\cdot \xi| < \delta}} \le C_0 \delta^\alpha, \qquad \text{ for each }t^2 + |\xi|^2 = 1.
\end{equation}
\end{definition}
Some of the simplest examples of genuinely nonlinear fluxes are when $A:\R \rightarrow \R$ is a scalar, strictly convex function, in which case $\alpha = 1$. Definition \ref{defn:genuine_nonlinearity} is one standard generalization of such fluxes to the multi-dimensional setting that also allows the flux to be degenerate at some points (i.e. $A:\R \rightarrow \R$ can have inflection points when $\alpha < 1$). Throughout, we have in mind the prototypical genuinely nonlinear flux, the multi-dimensional Burgers flux:
\begin{equation}\label{eqn:multid_burgers}
A(u) = (A_1(u), \hdots, A_n(u)) = \left(u^2/2, u^3/3,  \hdots, u^{n+1}/(n+1)\right),
\end{equation}
which is genuinely nonlinear with exponent $\alpha = 1/n$.

We mention that in this general setting, existence of global-in-time solutions in $L^1 \cap L^\infty$ is classical and was shown by Kru\v{z}kov using using the vanishing viscosity method. We mention that in one space dimension, one can additionally use the method of compensated compactness as done by Tartar in \cite{tartar}. Uniqueness and nonlinear stability in $L^1$ are also classical and are shown using the doubling of variables method introduced by Kru\v{z}kov in \cite{kruzkov}. However, Kru\v{z}kov's result relies upon the large family of entropies $\abs{u - k}$ defined in \eqref{eqn:entropy} and is known to breakdown for systems. Subsequently, in \cite{dafermos,diperna} Dafermos and DiPerna introduced the relative entropy method, which provides nonlinear stability of Lipschitz solutions in the $L^2$ norm, with the advantage that it relies upon a single convex entropy and is quite amenable to the study of systems. Our main result is similar to that of Dafermos and DiPerna, but in the $L^\infty$ norm and restricted to the scalar setting.

More precisely, our main theorem is a generalization of a recent result of Silvestre in \cite{silvestre}:
\begin{theorem}\label{thm:stability}
Suppose $A:\R^n \rightarrow \R$ is a genuinely nonlinear flux in the sense of Definition \ref{defn:genuine_nonlinearity} with parameters $C_0$, $C_1$, and $\alpha$ on the interval $[-\Lambda, \Lambda]$ for some $\Lambda \in \R^+$.
Suppose $u$ and $\tilde u$ are bounded entropy solutions to \eqref{eqn:scalar} on $[s,T] \times \Omega$ for $\Omega$ a bounded, open subset of $\R^n$ and $s < T$ with $\|u\|_{L^\infty}, \|\tilde u \|_{L^\infty} \le \Lambda$ and $\|\nabla \tilde u\|_{L^\infty} \le \Gamma$. Then, for any $\widetilde \Omega$ compactly contained in $\Omega$, and any $s < t < T$,
\begin{equation}\label{eq:main_estimate}
\|u - \tilde u\|_{L^\infty([t,T] \times \widetilde \Omega)} \le \frac{C(\Lambda, C_0, C_1,\alpha, n)(1 + d\,\Gamma)^{\eta}}{d^{(n+1)\gamma}}\|u - \tilde u\|_{L^1([s,T]\times \Omega)}^{\gamma},
\end{equation}
for $\gamma = \gamma(\alpha,n) \in (0,1)$, $\eta = \eta(\alpha, n) \in (0,\alpha/2)$, and $d = \min(t-s,dist(\bd\Omega, \widetilde \Omega))$.
\end{theorem}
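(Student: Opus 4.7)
The plan is to combine the kinetic formulation of scalar conservation laws with a De Giorgi iteration, extending Silvestre's strategy in \cite{silvestre} from the comparison of a solution with a constant to the comparison of two entropy solutions, one of which is Lipschitz.

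First, I would pass to the kinetic formulation. Setting $\chi(v, w) = \mathbf{1}_{0 < v < w} - \mathbf{1}_{w < v < 0}$, the entropy inequalities \eqref{eqn:entropy} for $u$ and $\tilde u$ translate into the distributional identities $\partial_t \chi(v, u) + a(v)\cdot \nabla_x \chi(v, u) = \partial_v m$ and similarly for $\tilde u$, with nonnegative measures $m,\tilde m$ on $\R_t \times \R^n_x \times \R_v$. The difference $f := \chi(v, u) - \chi(v, \tilde u)$ then satisfies $\partial_t f + a(v)\cdot \nabla_x f = \partial_v(m - \tilde m)$, and the elementary identity $\int_{\R}\abs{f(t,x,v)}\,dv = \abs{u(t,x) - \tilde u(t,x)}$ reduces the theorem to an $L^1_{t,x,v}$-to-$L^\infty_{t,x,v}$ regularization estimate for $f$, up to a multiplicative constant controlled by $\Lambda$.

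Next, I would run a De Giorgi iteration at an arbitrary interior point $(t_0, x_0, v_0)$. For a sequence of shrinking cylinders $Q_k$ centered there and truncation levels $\theta_k \nearrow \theta_\infty > 0$, define
\begin{equation}
E_k = \int_{Q_k}\bigl[(f - \theta_k)_+^2 + (f + \theta_k)_-^2\bigr]\,dt\,dx\,dv.
\end{equation}
Testing the kinetic equation for $f$ against an appropriate truncation gives a local energy inequality for $E_k$; the genuine nonlinearity \eqref{eqn:nondegeneracy} then feeds into a quantitative averaging lemma, producing a gain of integrability for velocity averages of the truncated $f$. Standard De Giorgi bookkeeping converts these two ingredients into a nonlinear recurrence of the form $E_{k+1} \le C M^k E_k^{1+\beta}$ for some $\beta > 0$, so that if $E_0$ is small (quantified by a small power of $\|f\|_{L^1}$) the iteration yields $E_k \to 0$ and hence $\abs{f(t_0, x_0, v_0)} \le \theta_\infty$.

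The main new obstacle, compared with the constant-reference case of \cite{silvestre}, is that $\chi(v, \tilde u)$ has a jump at $v = \tilde u(t,x)$ that genuinely moves in $(t,x)$, producing source terms controlled by $\nabla_{t,x}\tilde u$. I would absorb these by passing to the moving frame $w := v - \tilde u(t,x)$, in which $\chi(v, \tilde u)$ becomes a fixed Heaviside in $w$ while the transport operator picks up a drift $-(\partial_t \tilde u + a(v)\cdot \nabla_x \tilde u)\partial_w$ of pointwise size $O(\Gamma)$. The nondegeneracy condition applied to the moving-frame symbol then holds with a correction factor of order $(1 + r\Gamma)$ at velocity-scale $r$; propagating this cost through the averaging lemma and through $k$ iteration steps yields the prefactor $(1 + d\Gamma)^{\eta}$ of \eqref{eq:main_estimate}, while tracking the scaling of $E_0$ against $\|f\|_{L^1}\sim\|u-\tilde u\|_{L^1}$ produces the exponent $\gamma$ and the loss $d^{-(n+1)\gamma}$. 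A final covering argument over $(t_0, x_0, v_0)\in [t,T]\times \widetilde\Omega\times [-\Lambda,\Lambda]$ upgrades the pointwise bound at each apex to a uniform $L^\infty$ estimate on $[t,T]\times \widetilde\Omega$. I expect the crux of the argument to be the moving-frame averaging lemma with sharp dependence on $\Gamma$, since once the correct gain of integrability is in hand the De Giorgi iteration is largely mechanical.
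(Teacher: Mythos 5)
Your overall strategy (kinetic formulation for the difference $h=\chi(v,u)-\chi(v,\tilde u)$, averaging lemma powered by genuine nonlinearity, De Giorgi iteration, then scaling to get the $d^{-(n+1)\gamma}$ loss) matches the paper. But there is a genuine gap in the object you iterate on. You propose truncating the kinetic function itself, via $E_k=\int_{Q_k}[(f-\theta_k)_+^2+(f+\theta_k)_-^2]$ with levels $\theta_k\nearrow\theta_\infty$, aiming at $|f(t_0,x_0,v_0)|\le\theta_\infty<1$. This cannot work: $f$ is $\{-1,0,1\}$-valued, so the superlevel sets $\{f>\theta_k\}$ are the \emph{same} set $\{f=1\}$ for every $\theta_k\in(0,1)$, the truncations $(f-\theta_k)_+=(1-\theta_k)\mathbf{1}_{\{f=1\}}$ gain nothing as $k$ grows, and the conclusion $|f|\le\theta_\infty<1$ is simply false wherever $u\neq\tilde u$. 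The De Giorgi scheme must instead be run on the \emph{macroscopic} truncations $(u-\tilde u-\ell_k)_+=\int_{\tilde u(x)+\ell_k}^{\Lambda}h\,dv$, with the levels $\ell_k$ living in the velocity variable relative to the moving graph $v=\tilde u(x)+\ell$; the iterated quantity in the paper is $A_k=\int_{B_k}\int_{\tilde u(x)+\ell_k}^{\Lambda}h\,dv\,dx$, and the measure-of-level-set factor comes from Chebyshev applied to these velocity averages. This is exactly where the averaging lemma's gain of integrability (which applies to velocity averages, not to $f$ pointwise) enters.

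Two further points. First, you write the right-hand side as $\partial_v(m-\tilde m)$ and never use a sign; the paper's total-variation estimate on the dissipation measure restricted to $\{v>\tilde u(x)+\ell\}$ (its Lemma 3.1) relies essentially on the fact that a Lipschitz solution is entropy conservative, so $\tilde m=0$ and the remaining measure $\mu=m$ is nonnegative — without this you cannot lower-bound the tested measure by its total variation on the level set. Second, your moving-frame change of variables $w=v-\tilde u(t,x)$ introduces a drift $-(a(v)-a(\tilde u))\cdot\nabla_x\tilde u\,\partial_w$ with $(t,x)$-dependent coefficients, and the averaging lemma you need (Theorem 2.1 of the paper) is a constant-coefficient, Fourier-based statement; a ``moving-frame averaging lemma with sharp dependence on $\Gamma$'' is not standard and would itself require proof. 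The paper sidesteps this entirely by staying in the original variables and building the graph $v=\tilde u(x)+\ell$ into the cutoff $\psi(x,v)$ and the test functions, so that the $\Gamma$-dependence enters only through $L^1$ and total-variation bounds on controllable source terms.
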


\begin{remark}
    Both exponents $\gamma$ and $\eta$ are explicitly computable, and are computed in the main step of the proof in Section \ref{sec:main}. However, the exponents $\gamma$ and $\eta$ in Theorem \ref{thm:stability} are usually far from optimal. At present, we care only that there exist some positive exponents satisfying \eqref{eq:main_estimate}, which will allow us to deduce some asymptotic stability. 
\end{remark}

Theorem \ref{thm:stability} was shown by Silvestre in \cite{silvestre}, when $\tilde u$ is a constant. More precisely, Silvestre's result yields the estimate \eqref{eq:main_estimate} for solutions $u$ which satisfy $u(t,x) - m \in L^1\cap L^\infty$, for some $m\in \R$. This provides $L^\infty$ stability with explicit algebraic decay rates of $u$ to $m$ as $t\rightarrow \infty$, with an optimal rate if $m = 0$. In \cite{serre_silvestre}, Serre and Silvestre removed the restriction that $u(t,x) \in L^\infty$, at least for the generalized Burgers equation, using the compensated integrability theory developed by Serre in \cite{serre1, serre2}.

As a consequence of Theorem \ref{thm:stability}, the $L^1$ contraction property, and the maximum principle, we deduce the following decay rates for general scalar conservation laws:
\begin{corollary}\label{cor:rate}
Suppose $A:\R^n \rightarrow \R$ is a genuinely nonlinear flux in the sense of Definition \ref{defn:genuine_nonlinearity} with parameters $C_0$, $C_1$, and $\alpha$ on the interval $[-\Lambda, \Lambda]$ for some $\Lambda \in \R^+$.
Suppose $u$ and $\tilde u$ are bounded entropy solutions to \eqref{eqn:scalar} on $\R^+ \times \R^n$ with initial data $u_0$ and $\tilde u_0$, respectively.
Suppose that $u_0 - \tilde u_0 \in L^1(\R^n)$, $\tilde u$ is Lipschitz, and further
\begin{equation}
    \norm{u_0}_{L^\infty(\R^n)} \le \Lambda, \quad\norm{\tilde u_0}_{L^\infty(\R^n)} \le \Lambda, \quad\text{ and } \quad \sup_{t \in \R^+} t\|\nabla \tilde u(t)\|_{L^\infty(\R^n)} \le \overline{\Gamma}.
\end{equation}
Then,
\begin{equation}\label{eqn:rate}
\|u(t) - \tilde u(t)\|_{L^\infty(\R^n)} \le \frac{C(\Lambda, C_0, C_1, \alpha, n) (1 + \overline{\Gamma})^\eta}{t^{n\gamma}}\|u_0 - \tilde{u}_0\|_{L^1(\R^n)}^\gamma,
\end{equation}
where $\gamma$ and $\eta$ are the exponents from Theorem \ref{thm:stability}.
\end{corollary}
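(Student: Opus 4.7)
The plan is to reduce the corollary to a pointwise application of Theorem \ref{thm:stability} on a parabolic cylinder whose spatial and temporal scales are both comparable to $t$. Fix $t > 0$ and a point $x_0 \in \R^n$; the goal is to bound $|u(t,x_0) - \tilde u(t,x_0)|$ by the right-hand side of \eqref{eqn:rate} and then take a supremum in $x_0$. The main design choice is to pick the parameters $(s,T,\Omega,\widetilde\Omega)$ so that the ``dissipation distance'' $d$ is of order $t$; this is exactly what makes the Lipschitz bound $\Gamma \sim \overline{\Gamma}/t$ enter the estimate only through the scale-invariant factor $d\,\Gamma$.

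Concretely, I would take $s = t/2$, $T = 2t$, $\Omega = B_{t/2}(x_0)$, and $\widetilde\Omega = B_{t/4}(x_0)$, so $d = \min(t/2, t/4) = t/4$. Two hypotheses of Theorem \ref{thm:stability} need to be verified. First, the $L^\infty$ bound: by the maximum principle for Kru\v{z}kov entropy solutions, $\|u(\tau)\|_{L^\infty(\R^n)} \le \|u_0\|_{L^\infty(\R^n)} \le \Lambda$ for every $\tau \ge 0$, and likewise for $\tilde u$. Second, the Lipschitz bound on $\tilde u$: from $\sup_\tau \tau \|\nabla \tilde u(\tau)\|_{L^\infty(\R^n)} \le \overline{\Gamma}$, we get $\|\nabla \tilde u\|_{L^\infty([s,T]\times \Omega)} \le 2\overline{\Gamma}/t$, so we may take $\Gamma = 2\overline{\Gamma}/t$. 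Thus $d\,\Gamma \lesssim \overline{\Gamma}$, and the prefactor $(1 + d\,\Gamma)^\eta$ is controlled by a multiple of $(1 + \overline{\Gamma})^\eta$, independent of $t$.

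Plugging these choices into Theorem \ref{thm:stability} gives
\[
    |u(t,x_0) - \tilde u(t,x_0)| \le \|u - \tilde u\|_{L^\infty([t,2t]\times \widetilde\Omega)} \le \frac{C(\Lambda, C_0, C_1, \alpha, n)(1 + \overline{\Gamma})^\eta}{t^{(n+1)\gamma}}\, \|u - \tilde u\|_{L^1([t/2,2t]\times \Omega)}^{\gamma}.
\]
To bound the $L^1$ norm, I would invoke the classical Kru\v{z}kov $L^1$-contraction $\|u(\tau) - \tilde u(\tau)\|_{L^1(\R^n)} \le \|u_0 - \tilde u_0\|_{L^1(\R^n)}$ for all $\tau \ge 0$, and simply integrate over $\tau \in [t/2, 2t]$ to get a bound of the form $\tfrac{3t}{2}\,\|u_0 - \tilde u_0\|_{L^1(\R^n)}$. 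The powers of $t$ then combine as $t^{\gamma - (n+1)\gamma} = t^{-n\gamma}$, which is precisely the decay rate claimed in \eqref{eqn:rate}. Since the resulting bound is uniform in $x_0$, taking the supremum finishes the proof.

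There is no real obstacle here beyond the scaling bookkeeping; the only subtle point is recognizing that choosing $\Omega$ of spatial width $\sim t$ (rather than, say, a fixed width) is forced on us if we want the Lipschitz contribution $d\,\Gamma$ and the time window $T - s$ to both remain scale-invariant as $t \to \infty$.
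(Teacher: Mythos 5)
Your proposal is correct and follows essentially the same route as the paper: scale the space-time cylinder with $t$ so that $d\,\Gamma$ stays of order $\overline{\Gamma}$, control the hypotheses via the maximum principle and Kru\v{z}kov $L^1$-contraction, and combine $t^{\gamma}$ from the time integration with $t^{-(n+1)\gamma}$ from the theorem. The only detail worth adding is that Theorem \ref{thm:stability} is applied in space-time, so you also need $|\partial_t \tilde u| \le C(C_1,\Lambda)|\nabla \tilde u|$ (which follows since the Lipschitz solution $\tilde u$ satisfies \eqref{eqn:scalar} pointwise) to convert the hypothesis on the spatial gradient into the full Lipschitz bound required by the theorem.
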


Prior to Silvestre's result \cite[Theorem 1.5]{silvestre}, decay rates in $L^\infty$ were largely limited to particular one dimensional equations \cite{dafermos3, lax}. Additionally, $L^p$ decay rates for $1 \le p < \infty$ were limited to periodic solutions in multiple space dimensions \cite{chen_frid2,dafermos2,debussche_vovelle,enquist_e,panov2}. While Silvestre's result provided a major extension of these preceding results, Silvestre's result does not apply to solutions that tend asyptotically towards a non-constant profile.

Importantly, in Corollary \ref{cor:rate}, neither $u$ nor $\tilde u$ needs to belong to $L^1(\R^n)$, only their difference. As a prototypical example, Corollary \ref{cor:rate} applies when $\tilde u$ is a planar rarefaction wave, which is \emph{not} covered by the results of \cite{silvestre}. The stability of planar rarefaction waves has been studied extensively in the $L^2$ setting by means of the relative entropy method (for instance, see \cite{serre3}) and by extension in $L^p$ for $1 \le p < \infty$. We refer the reader to \cite[Chapter 5]{dafermos_book} for a thorough overview of such results. Additionally, stability of planar rarefaction waves has been studied in $L^\infty$ in various viscous settings, but not uniformly in the viscosity parameter. Finally, in the one dimensional setting, more is known: For fluxes with at most a single inflection point, the optimal decay rate is known using a study of generalized characteristics \cite{dafermos3} and the references therein (see also \cite[Chapter 11]{dafermos_book}). However, this method relies heavily upon the ordering of the real line and has not been extended to multidimensional settings. To our knowledge, Corollary \ref{cor:rate} appears to be the first asymptotic stability result for planar rarefaction waves in the $L^\infty$ norm, at least in the purely hyperbolic setting in multiple space dimensions. 

\begin{remark}
    The assumption that $\norm{\nabla \tilde u(t)}_{L^\infty(\R^n)} = O(t^{-1})$ is rather reasonable by scaling considerations. For the prototypical global-in-time Lipschitz solutions, namely planar rarefaction waves, this is the explicitly computable rate of decay. Additionally, this rate of decay holds in the special case $n = 1$ and $A:\R \rightarrow \R$ a strictly convex flux. Indeed, $\tilde u(t,x)$ is increasing in $x$ (because it is globally Lipschitz) and the well-known Oleinik estimate, originally shown in \cite{oleinik}, implies this rate of decay, with an explicitly computable constant $\overline{\Gamma} = \inf_{u\in [-\Lambda,\Lambda]} A^{\prime\prime}(u)$. In addition, one can use the ``asymptotic Oleinik estimate'' shown in \cite{jenssen_sinestrari} for fluxes with a single inflection point to obtain the existence of $\overline{\Gamma}$ for the fluxes $A(u) = u^p$, when $p \ge 2$. However, in general, it is unclear whether the existence of $\overline{\Gamma}$ is guaranteed. Oleinik type estimates are known to fail even in the one dimensional case when the flux has just two inflection points, and additionally for most multi-dimensional fluxes \cite{hoff}. Furthermore, in \cite{crippa_otto_westdickenberg, delellis_otto_westdickenberg}, the authors show that $C^{0,\alpha}$ estimates with $\alpha = 1/n$, the genuine nonlinearity exponent, hold and are optimal for general entropy conservative solutions to the multi-dimensional Burgers equation. This suggests the difficulty of obtaining quantitative estimates on the gradient.
\end{remark}


\begin{remark}
    Perhaps surprisingly, we are not able to sharpen the exponent $\gamma$ using scaling, even for homogeneous conservation laws like the multidimensional Burgers equation, where the flux $A(u)$ is defined in \eqref{eqn:multid_burgers}. For the flux $A$, there is a two-parameter scaling which allows us to track the dependence of \eqref{eqn:rate} on $\Lambda$ and obtain
    \begin{equation}\label{eqn:full_estimate}
        \|u(t) - \tilde{u}(t)\|_{L^\infty(\R^n)} \le \frac{C(n)(1 + \Lambda^{n-1}\overline{\Gamma})^{\eta}\Lambda^{1 - \gamma/\gamma_0}\|u_0 - \tilde{u}_0\|_{L^1(\R^n)}^\gamma}{t^\gamma},
    \end{equation}
    where $\gamma_0 = \left(1 + \frac{n(n+1)}{2}\right)^{-1}$ is the same as in \cite{silvestre}, and should be the optimal decay rate. However, the quantity $\Lambda$ appears on the right hand side, which we expect to stay order $1$ for all time, preventing us from iteratively improving the estimate. Unfortunately, it seems we cannot use our method to replace $\Lambda$ with a quantity like the $L^\infty$ oscillation which is decaying (at least locally).
\end{remark}

The remainder of the note is organized as follows: In Section \ref{sec:prelim}, we introduce some preliminary results needed in the proof of Theorem \ref{thm:stability}, namely the kinetic formulation of \eqref{eqn:scalar} and the associated averaging lemmas. We also briefly summarize the main new idea behind Theorem \ref{thm:stability}. In Section \ref{sec:main}, we provide a complete proof of Theorem \ref{thm:stability}. In Section \ref{sec:applications}, we provide a short proof of Corollary \ref{cor:rate}.



\section{Preliminaries and Proof Idea}\label{sec:prelim}

\subsection{Preliminaries}
For bounded entropy solutions to \eqref{eqn:scalar}, we recall that the associated kinetic function $f$ is of the form
\begin{equation}\label{eqn:structure}
f(t,x,v) = \begin{cases} \chi_{[0,u(t,x)]}(v) \quad \text{if }u(t,x) \ge 0\\ -\chi_{[u(t,x),0]}(v) \quad \text{if }u(t,x) \le 0\end{cases}.
\end{equation}
By the work of Lions, Perthame, and Tadmor (see \cite{lions_perthame_tadmor1}), this kinetic function satisfies the equation
\begin{equation}\label{eqn:kinetic}
\partial_t f + a(v)\cdot\nabla_x f = \partial_v\mu,
\end{equation}
where $\mu$ is a (unique) locally finite Borel measure on $\R^{n+2}$. Moreover, it is clear that $f$, and consequently $\mu$, are supported in $\R^{n+1} \times [-\|u\|_{L^\infty},\|u\|_{L^\infty}]$ and $u$ is recovered from $f$ via the relation
\begin{equation}\label{eqn:kinetic_relation}
u(t,x) = \int_{\R} f(t,x,v) \;dv.
\end{equation}

We will make heavy use of the following relatively standard averaging lemma. The version used can be found with a proof in \cite{tadmor_tao}. 
\begin{theorem}[\protect{\cite[Averaging Lemma 2.1]{tadmor_tao}}]\label{thm:averaging}
Suppose $f \in L^2(\R^{n+1}\times \R_v)$ satisfies the equation
$$\partial_t f + a(v)\cdot \nabla_x f = \partial_v\mu_1 + \mu_0 + g,$$
 where $g\in L^1(\R^{n+1}\times \R)$, and both $\mu_1$ and $\mu_0$ are Borel measure with total finite variation. Assume further that $a(v)$ satisfies the non-degeneracy condition \eqref{eqn:nondegeneracy} with parameters $\alpha$, $C_0$. Then, for any $\psi(v) \in L^\infty(\R_v)$, $s = \frac{\alpha}{\alpha + 4}$,
\begin{equation}
\left\|\int \psi(v)f(t,x,v)\;dv\right\|_{W^{\theta, r}(\R^{n+1})} \lesssim_{C_0,\alpha, \psi} \left\|f\right\|_{L^2(\R^{n+2})}^{1-\theta}\left(\|g\|_{L^1} + \|\mu_0\|_{TV} + \|\mu_1\|_{TV}\right)^{\theta},
\end{equation}
where $\theta \in (0,s)$ and $r = \frac{2}{1 + \theta}$.
\end{theorem}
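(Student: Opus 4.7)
My strategy is a Fourier-analytic averaging argument combined with a Littlewood--Paley decomposition. After Fourier transforming in $(t,x)$ with dual variables $(\tau,\xi)$ and setting $L(v) := \tau + a(v)\cdot \xi$, the kinetic equation becomes
$$iL(v)\,\hat f(\tau,\xi,v) = \partial_v \hat{\mu}_1 + \hat{\mu}_0 + \hat{g}.$$
The averaged quantity $\overline f(t,x) = \int \psi(v) f(t,x,v)\,dv$ has Fourier transform $\int \psi(v)\hat f(\tau,\xi,v)\,dv$, and the problem reduces to bounding this in a weighted $L^r_{(\tau,\xi)}$ norm. The whole point is that the scaled symbol $L/\rho$ (with $\rho := |(\tau,\xi)|$) is of unit type, so by homogeneity the nondegeneracy hypothesis \eqref{eqn:nondegeneracy} translates into $|\{v \in I : |L(v)| < \lambda\}| \le C_0(\lambda/\rho)^\alpha$.

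Localize $(\tau,\xi)$ to a dyadic annulus $\rho \sim 2^j$, fix a small threshold $\delta>0$, and split the $v$-integration into the resonant set $R_j = \{v : |L(v)| \le \delta\,2^j\}$ and its complement. On $R_j$, apply Cauchy--Schwarz: using $|R_j| \lesssim \delta^\alpha$ and $\|\hat f(\tau,\xi,\cdot)\|_{L^2_v}$ gives a bound of order $\delta^{\alpha/2}\|\hat f(\tau,\xi,\cdot)\|_{L^2_v}$. On the complement, invert $L$ in the equation: the undifferentiated sources contribute $\lesssim (\delta\,2^j)^{-1}(\|\mu_0\|_{TV} + \|g\|_{L^1})$, while the distributional term $\partial_v\hat{\mu}_1$ is handled by a smooth cutoff $\phi(L/(\delta\, 2^j))$ replacing the sharp indicator and integrating by parts in $v$; the resulting terms (falling on $\phi$, on $\psi$ after mollification, or on $1/L$) are again controlled by the nondegeneracy estimate on level sets of $L$. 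For merely $L^\infty$ test functions $\psi$ one first smooths $\psi$ at a $v$-scale proportional to $\delta$ and absorbs the cost into the $\psi$-dependent constant.

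Squaring and integrating over the annulus, optimizing in $\delta$ balances $\delta^{\alpha/2}\|\hat f(\tau,\xi,\cdot)\|_{L^2_v}$ against $(\delta\,2^j)^{-1}$ times the source norms; this produces a gain of $2^{-j\theta}$ on the shell for any $\theta < s := \alpha/(\alpha+4)$. Weighting by $2^{j\theta}$ and summing dyadically yields, via interpolation between the two estimates, the homogeneous Sobolev bound
$$\|\overline f\|_{H^\theta(\R^{n+1})} \lesssim \|f\|_{L^2}^{1-\theta}\left(\|\mu_0\|_{TV} + \|\mu_1\|_{TV} + \|g\|_{L^1}\right)^\theta.$$
The final upgrade from $L^2$ to $L^r$ with $r = 2/(1+\theta) < 2$ is the delicate step: it comes from a square-function / Hausdorff--Young argument on the Littlewood--Paley pieces, using that the thin $v$-slab $\{|L| \lesssim \delta\, 2^j\}$ behaves as a codimension-one object in $(\tau,\xi,v)$ and therefore interacts better than scaling alone would suggest. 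I expect this interpolation step to be the main obstacle; handling $\partial_v\hat{\mu}_1$ against a non-smooth $\psi$ is a comparatively minor technicality that enters the $\psi$-dependence of the constant.
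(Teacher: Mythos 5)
This statement is not proved in the paper at all: it is quoted verbatim from Tadmor--Tao, and the only ``proof'' the paper offers is the citation. Your sketch is the standard Fourier-analytic proof of such averaging lemmas and is essentially the argument of the cited source: Fourier transform in $(t,x)$, dyadic localization in $\rho=|(\tau,\xi)|$, splitting $v$ into the resonant slab $\{|L|\le\delta\rho\}$ (Cauchy--Schwarz plus the nondegeneracy condition, giving $\delta^{\alpha/2}\|\hat f\|_{L^2_v}$) and its complement (division by the symbol, with $\partial_v\hat\mu_1$ handled by integration by parts against a smooth cutoff), then optimization in $\delta$ and dyadic summation. Two points need tightening. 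First, your optimization as literally written --- balancing $\delta^{\alpha/2}\|\hat f\|_{L^2_v}$ against $(\delta 2^j)^{-1}$ times the source norms --- yields the exponent $\alpha/(\alpha+2)$, not $\alpha/(\alpha+4)$. The correct balance must use the worst term, which is the one produced by the $\partial_v\hat\mu_1$ integration by parts when the derivative falls on $1/L$ or on the cutoff: it costs an extra factor $\delta^{-1}$, i.e.\ the non-resonant contribution is $O(\delta^{-2}2^{-j})$, and $\delta^{\alpha/2}=\delta^{-2}2^{-j}$ is precisely what gives $s=\alpha/(\alpha+4)$. (This step also requires $a\in C^1$, which is why the constant in the paper's application carries $C_1$.) Second, the upgrade from $L^2$ to $L^r$ with $r=2/(1+\theta)$ is not a square-function or codimension-one phenomenon: it is real interpolation between the $L^2$-type bound carried by $f$ (weight $1-\theta$) and the $L^1$/measure-type bound carried by the sources (weight $\theta$), since $1/r=(1-\theta)/2+\theta$; the $W^{\theta,r}$ norm is exactly the interpolation space in which the two halves of the decomposition can be recombined. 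You correctly flag this as the delicate step, but the mechanism you propose for it is not the one that works; as it stands this part of your argument is a gap, albeit one filled by the cited reference.
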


\subsection{Proof Idea}

The main idea underlying the proof Theorem \ref{thm:stability} is to bypass \eqref{eqn:scalar} and to work directly on the kinetic formulation given by \eqref{eqn:kinetic}. This is motivated by the fact that if $u$ and $\tilde u$ are two non-negative solutions to \eqref{eqn:scalar}, $w:= u - \tilde u$ solves an equation of the form \eqref{eqn:scalar} with a new flux $\tilde A$, 
which is only $(t,x)$-independent if $\tilde u$ is identically constant. However, if $f$ and $g$ are the kinetic functions of $u$ and $\tilde u$, respectively, with corresponding measures $\mu_1$ and $\mu_2$, linearity of \eqref{eqn:kinetic} immediately implies $h:=f - g$ solves
\begin{align}
&\partial_t h + a(v) \partial_x h = \partial_v (\mu_1 - \mu_2)\\
&h(t,x,v)= \begin{cases} 1 &\text{if }u(t,x) > v > \tilde u(t,x) > 0\\
	-1 &\text{if } \tilde u(t,x) > v > u(t,x) > 0\\
	0 &\text{otherwise} \end{cases}.
\end{align} 
We see that $h$ solves an equation very similar to \eqref{eqn:kinetic}, but at the expense of some rigidity on the measure $\mu_1 -\mu_2$ (which now lacks a sign) and on the ``shape of the equilibrium functions''. To retain enough structure to apply the De Giorgi method of \cite{silvestre}, we will need to assume $\mu_2 = 0$ and, additionally, that $\tilde u$ is Lipschitz. The De Giorgi method was initially introduced in \cite{degiorgi} to study the regularity of solutions to nonlinear elliptic equations. Since then, it has been applied in various fields of partial differential equations far beyond its initial scope. In one such adaptation, Silvestre utilized dispersive averaging lemmas instead of the original elliptic Caccioppoli inequality to examine scalar conservation laws in \cite{silvestre}. Subsequently, the author modified Silvestre's proof to rely solely on the kinetic formulation, which enabled an application of the method to a system of isentropic gas dynamics in \cite{golding}. As we necessarily rely heavily upon the kinetic formulation, we follow the approach from \cite{golding} in the original context of scalar conservation laws. 



\section{Proof of Theorem \ref{thm:stability}}\label{sec:main}

In this section, we prove Theorem \ref{thm:stability}. Throughout, we will treat time identically to the spatial variables and so to simplify notation, we will introduce $\overline{x} = (t,x)$, $\overline{A}(u) = (u, A)$, and $\overline{a} = (1, a)$, which yields the equivalent time-independent formulations of \eqref{eqn:scalar} and \eqref{eqn:kinetic} as
\begin{align}
	\nabla \cdot \overline{A}(u) &= 0 \label{eqn:scalar2}\\
	\overline{a}(v) \cdot \nabla f & = \partial_{v}\mu \label{eqn:kinetic2}.
\end{align}
We work first at the unit scale and derive a local energy estimate for the difference $u - \tilde u$ of two solutions of \eqref{eqn:scalar2}, which will show a gain of integrability. 
To this end, we begin by showing local-in-$v$ control over the total variation of the entropy dissipation measure $\mu$.
\begin{lemma}\label{lem:variation}
Suppose $u$ and $\tilde{u}$ are bounded entropy solutions to \eqref{eqn:scalar2} on $B_2\subset \R^{n+1}$ with $0 \le u, \tilde u \le \Lambda$ on $B_2$ and $\tilde u$ is entropy conservative. Let $f$ and $g$ be the kinetic functions associated to $u$ and $\tilde u$, respectively, and $h = f - g$. Then, for any Lipschitz $w: \R^{n+1} \rightarrow \R$ with $0\le w \le \Lambda$ and $0 < r < R \le 2$, $\mu$ satisfies
\begin{align}
\mu\left( \set{(x,v) \mid x \in B_r, \ v\in (w(x), \infty)}\right) &\le C(\Lambda, n)\|\overline{a}\|_{L^\infty(0,\Lambda)}\left(\|w\|_{Lip(B_R)} + \frac{1}{R-r}\right)\\
    &\qquad\times\int_{B_R}\int_{w(x)}^\infty |h(x,v)| \;dvdx \label{eqn:bound1}\\
\mu\left( \set{(x,v) \mid x \in B_r, \ v\in (-\infty, w(x))}\right) &\le C(\Lambda, n)\|\overline{a}\|_{L^\infty(0,\Lambda)}\left(\|w\|_{Lip(B_R)} + \frac{1}{R-r}\right)\\
    &\qquad\times\int_{B_R}\int_{0}^{w(x)} |h(x,v)| \;dvdx.\label{eqn:bound2}
\end{align}
\end{lemma}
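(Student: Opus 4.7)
The plan is to bypass \eqref{eqn:scalar2} and work directly on the kinetic equation for $h$. Since $\tilde u$ is entropy conservative, its kinetic measure vanishes, and subtracting the kinetic equations for $f$ and $g$ yields $\overline{a}(v) \cdot \nabla h = \partial_v \mu$ distributionally, where $\mu$ is the (nonnegative) kinetic measure of $u$. Both \eqref{eqn:bound1} and \eqref{eqn:bound2} will follow by pairing this equation against a Lipschitz test function whose $v$-derivative recovers a signed multiple of the indicator of the region of interest and whose $\overline{x}$-gradient is controlled by $\|\nabla w\|_{L^\infty}$ and $1/(R-r)$.

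More concretely, fix a Lipschitz cutoff $\varphi$ on $\R^{n+1}$ with $\varphi \equiv 1$ on $B_r$, $\supp \varphi \subset B_R$, $0 \le \varphi \le 1$, and $\|\nabla \varphi\|_{L^\infty} \le C(n)/(R-r)$, together with a smooth cutoff $\eta(v)$ equal to $1$ on $[0,\Lambda]$ and supported in $[-1, \Lambda+1]$. For \eqref{eqn:bound1} I would test against
$$\Phi(\overline{x}, v) = \varphi(\overline{x}) \, (v - w(\overline{x}))_+ \, \eta(v),$$
so that on the supports of $h$ and $\mu$ one has $\partial_v \Phi = \varphi(\overline{x}) \, \mathbf{1}_{v > w(\overline{x})}$ and
$$\nabla_{\overline{x}} \Phi = \nabla \varphi(\overline{x}) \, (v - w(\overline{x}))_+ \, \eta(v) - \varphi(\overline{x}) \, \nabla w(\overline{x}) \, \mathbf{1}_{v > w(\overline{x})} \, \eta(v).$$
After a routine mollification, pairing with the kinetic equation and integrating by parts gives
$$\int \varphi(\overline{x}) \, \mathbf{1}_{v > w(\overline{x})} \, d\mu = \int h \, \overline{a}(v) \cdot \nabla_{\overline{x}} \Phi \, d\overline{x}\, dv.$$
The left-hand side dominates $\mu(\{\overline{x}\in B_r,\, v > w(\overline{x})\})$ because $\mu \ge 0$ and $\varphi \equiv 1$ on $B_r$. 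The right-hand side is bounded in absolute value by $\|\overline{a}\|_{L^\infty(0,\Lambda)} \bigl( C\Lambda/(R-r) + \|\nabla w\|_{L^\infty}\bigr) \int_{B_R}\int_{w(\overline{x})}^\infty |h|\, dv\, d\overline{x}$, using that $h$ is supported in $v \in [0,\Lambda]$ so that $(v - w(\overline{x}))_+ \le \Lambda$ on $\supp h$. Absorbing the $\Lambda$ into $C(\Lambda, n)$ yields \eqref{eqn:bound1}.

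For \eqref{eqn:bound2}, the identical argument applies with $\Phi(\overline{x}, v) = \varphi(\overline{x})(w(\overline{x}) - v)_+ \, \eta(v)$, for which $\partial_v \Phi = -\varphi(\overline{x}) \, \mathbf{1}_{v < w(\overline{x})}$; taking absolute values on both sides produces the same type of bound for $\mu(\{\overline{x}\in B_r,\, v < w(\overline{x})\})$, and $\int |h| \, \mathbf{1}_{v < w(\overline{x})}\, dv$ collapses to $\int_0^{w(\overline{x})} |h|\, dv$ since $h$ vanishes for $v < 0$. I expect the main technical obstacle to be justifying the integration by parts when $\Phi$ is only Lipschitz (not smooth) and $\mu$ is merely a Radon measure; this is handled in the standard way by mollifying $\varphi$ and $w$ separately and passing to the limit via dominated convergence, using the local finiteness of $\mu$ and the compact $v$-support of the integrand.
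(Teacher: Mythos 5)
Your proposal is correct and follows essentially the same route as the paper: test the kinetic equation for $h$ (which carries the same $\mu$ as $u$ since $g$ solves the homogeneous equation) against $\varphi(\overline{x})(v-w(\overline{x}))_+$, regularize to justify the pairing with the Radon measure $\mu$, and use $\mu\ge 0$ together with the compact $v$-support of $h$ to close the estimate. The only difference is that the paper regularizes with an explicit one-sided cutoff $\zeta_\eps(v-w(x))$ vanishing for $v<w(x)+\eps$ (so the limiting $v$-derivative is exactly $\varphi\chi_{(w(x),\infty)}$ even where $\mu$ charges the graph $\{v=w(x)\}$), whereas your generic mollification could leave a fractional contribution on that graph; since $\mu\ge0$ this only strengthens the lower bound, so your argument still goes through.
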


\begin{proof}
Let us first note that $\tilde u$ conserves entropy means exactly that $g$ solves the homogeneous equation,
\begin{equation}
\overline{a}(v) \cdot \nabla_x g = 0.
\end{equation}
Thus, $h = f - g$ satisfies \eqref{eqn:kinetic2} with the same $\mu$ as $u$. Next, we devise appropriate test functions to show \eqref{eqn:bound1}. 
We fix two cutoff functions $\varphi \in C^\infty_c(B_2)$ and $\zeta \in C^\infty_c(\R)$ such that:
\begin{equation}
\begin{cases}
\varphi(x) = 1 &\text{if }x \in B_r\\
\varphi(x) = 0 &\text{if }x\in B_2 \setminus B_R\\
0 \le \varphi(x) \le 1 &\text{for each }x\in B_2\\
|\nabla\varphi(x)| \le C(R-r)^{-1} &\text{for each }x \in B_2.
\end{cases}\quad \text{and}\quad
\begin{cases}
\zeta_{\eps}(v) = 1 &\text{if } 2\eps < v <\eps^{-1}\\
\zeta_{\eps}(v) = 0 &\text{if }-\infty < v < \eps \text { or } 2\eps^{-1} < v \\
0 \le \zeta_{\eps}(v) \le 1 &\text{for each }v\in \R\\
|\zeta_{\eps}^\prime(v)| \le Cv^{-1} &\text{for each }v\in\R^+.
\end{cases}
\end{equation}
We would like to use $[v-w(x)]_+ \varphi(x)$, which is evidently Lipschitz, 
as a test function in \eqref{eqn:kinetic2} to obtain \eqref{eqn:bound1}. However, because of the relatively arbitrary Borel measure $\mu$, we need to ensure that the $v$-derivative is continuous by smoothly cutting out a neighborhood of $\set{v=w(x)}$. More precisely our test function is: 
\begin{equation}
\Phi_{\eps}(x,v) := \varphi(x)\zeta_{\eps}(v - w(x))[v - w(x)]_+.
\end{equation}
Using $\Phi_{\eps}(x,v)$ as a test function in \eqref{eqn:kinetic2} for $h$ and remembering $h$ is supported in $B_2 \times [0,\Lambda]$, we find
\begin{equation}\label{eqn:regularized_equality}
\int_{B_2}\int_0^{\Lambda}\left[\overline{a}(v)\cdot \nabla_x \Phi_{\eps}(x,v)\right] h(x,v) \;dvdx  = \int_{B_2}\int_0^{\infty} \partial_v\Phi_{\eps} \;d\mu(x,v).
\end{equation}
We first note that 
\begin{equation}
\partial_v \Phi_{\eps}(x,v) = \varphi\zeta_{\eps}^\prime(v - w(x))[v - w(x)]_+ + \varphi\zeta_{\eps}(v - w(x))\chi_{(w(x),\infty)}(v).
\end{equation}
Therefore, we see that $|\partial_v \Phi_{\eps}(x,v)| \le C$ uniformly in $\eps > 0$ and $\partial_v \Phi_{\eps}$ converges pointwise everywhere to $\varphi(x)\chi_{(w(x),\infty)}(v)$ as $\eps \rightarrow 0^+$. By the dominated convergence theorem for the measure $\mu$,
\begin{equation}\label{eqn:RHS_bound}
\begin{aligned}
\lim_{\eps \rightarrow 0^+} \int_{B_2}\int_0^{\Lambda} \Phi_{\eps}(x,v) \;d\mu(x,v) &= \int_{B_2}\int_{w(x)}^{\Lambda} \varphi(x)  \;d\mu(x,v) \ge \mu\left( \set{(x,v) \mid x \in B_r, \ v\in (w(x), \Lambda)}\right).
\end{aligned}
\end{equation}
On the other hand, a similar computation shows $|\nabla_x \Phi_{\eps}(x,v)| \le C$ uniformly in $\eps$ on $B_2 \times [0,\Lambda]$ and
\begin{align*}
\nabla_x \Phi_{\eps}(x,v) &= [v-w(x)]_+\nabla_x \varphi(x) \zeta_{\eps}(v-w(x)) - [v-w(x)]_+\varphi(x)\zeta_{\eps}^\prime(v - w(x))\nabla w(x)\\
    &\qquad - \varphi(x)\zeta_{\eps}(v-w(x))\chi_{(w(x),\infty)}(v)\nabla_xw(x)\\
	&\rightarrow [v-w(x)]_+\nabla_x \varphi(x) - \varphi(x)\chi_{(w(x),\infty)}(v)\nabla_xw(x),
\end{align*}
pointwise for Lebesgue almost every $x,v$. By the dominated convergence theorem, we conclude the left hand side of \eqref{eqn:regularized_equality} converges to
\begin{equation}\label{eqn:LHS_bound}
\begin{aligned}
\lim_{\eps \rightarrow 0^+} LHS &= \int_{B_2} \int_{w(x)}^{\Lambda} \overline{a}(v) \cdot \left( [v-w(x)]_+\nabla_x \varphi(x) - \varphi(x)\nabla_x w(x) \right) h(x,v) \;dvdx\\
	&\le C(\Lambda, n)\|\overline{a}\|_{L^\infty(0,\Lambda)}\left(\|w\|_{Lip(B_R)} + \frac{1}{R-r}\right)\int_{B_R}\int_{w(x)}^{\Lambda} |h(x,v)| \;dvdx.
\end{aligned}
\end{equation}
Combining \eqref{eqn:RHS_bound} and \eqref{eqn:LHS_bound} proves \eqref{eqn:bound1}. To prove \eqref{eqn:bound2}, we redefine $\Phi_{\eps}(x,v)$ as
\begin{equation}
\Phi_{\eps}(x,v) := \varphi(x)\zeta_{\eps}(w(x) - v) [w(x) - v]_+,
\end{equation}
and apply the same argument using $h$ is supported in $B_2 \times [0,\Lambda]$.
\end{proof}

\medskip

In the standard De Giorgi iteration method, one estimates the level set functions $(u-k)_+$ for $k \ge 0$, which are naturally positive subsolutions to the original equation. However, as observed earlier, the functions $(u - \tilde u - k)_+$ for $k \ge 0$ are no longer subsolutions. One of the main ideas in the proof of Theorem \ref{thm:stability} is to replace the standard  level set functions with a kinetic analogue, namely,
\begin{equation}
(u(x) - \tilde u(x) - k)_+ = \int_{\tilde u(x) + k}^{\Lambda} h(x,v) \;dv.
\end{equation}
This motivates the following estimate, in which we combine our total variation estimate on $\mu$ with the averaging lemmas from Theorem \ref{thm:averaging} to find an energy estimate on these functions:
\begin{lemma}[Energy Estimate]\label{lem:energy}
Suppose $u$ and $\tilde{u}$ are bounded entropy solutions to \eqref{eqn:scalar2} on $B_2\subset \R^{n+1}$ with $0 \le u, \tilde u \le \Lambda$ on $B_2$ and $\tilde u$ Lipschitz with $\|\tilde u\|_{Lip(B_2)} \le \Gamma$. Let $f$ and $g$ be the kinetic functions associated to $u$ and $\tilde u$, respectively, and $h = f - g$.
Fix $\theta(\alpha) = \alpha/(\alpha + 4)$, where $\alpha$ and $C_0, \ C_1 > 0$ are the parameters from the genuine nonlinearity condition \eqref{eqn:nondegeneracy}. For any $0 < \beta < \frac{1 - \theta}{2} + \frac{\theta}{n+1}$ for any $0 < r < R < 2$ and $0 \le \ell_1 < \ell_2 \le \Lambda$, we have
\begin{align}
\int_{B_r}\int_{\tilde{u} + \ell_2}^{\Lambda} h \;dvdx &\le \frac{C(\Lambda, C_0, C_1, \alpha, \beta, n)}{(\ell_2 - \ell_1)^\theta}\left(\Gamma + \frac{1}{R - r}\right)^\theta\\
    &\times \left(\int_{B_R}\int_{\tilde{u} + \ell_1}^{\Lambda} h \;dvdx\right)^{\frac{1 + \theta}{2}}\abs{B_r \cap \set{\int_{\tilde u(x) + \ell_2}^{\Lambda} h(x,v) \;dv > 0}}^\beta\label{eqn:energy1}\\[7pt]
\int_{B_r}\int_{-\Lambda}^{\tilde{u} - \ell_2} |h| \;dvdx &\le \frac{C(\Lambda, C_0, C_1, \alpha, \beta, n)}{(\ell_2 - \ell_1)^\theta}\left(\Gamma + \frac{1}{R - r}\right)^\theta\\
    &\times \left(\int_{B_R}\int_{-\Lambda}^{\tilde{u} - \ell_1} |h| \;dvdx\right)^{\frac{1 + \theta}{2}}\abs{B_r \cap \set{\int_{-\Lambda}^{\tilde u(x) - \ell_2} |h(x,v)| \;dv > 0}}^\beta \label{eqn:energy2}
\end{align}
\end{lemma}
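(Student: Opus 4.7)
My plan is to localize the kinetic formulation \eqref{eqn:kinetic2} through a product cutoff in $x$ and in $v-\tilde u(x)$, derive a transport-with-measure-source equation for the truncation of the form demanded by the averaging lemma (Theorem~\ref{thm:averaging}), and then extract the gain of integrability by combining the averaging lemma with Sobolev embedding and H\"older's inequality. I give the argument for \eqref{eqn:energy1}; the estimate \eqref{eqn:energy2} follows by applying the same argument to $-h$ on the region $\{v<\tilde u(x)\}$, where $h\le 0$ by the explicit form \eqref{eqn:structure}.

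Fix cutoffs $\varphi\in C^\infty_c(B_{(r+R)/2})$ with $\varphi\equiv 1$ on $B_r$ and $\abs{\nabla\varphi}\le C/(R-r)$, and $\phi:\R\to[0,1]$ smooth with $\phi\equiv 0$ on $(-\infty,\ell_1]$, $\phi\equiv 1$ on $[\ell_2,\infty)$, and $\abs{\phi'}\le C/(\ell_2-\ell_1)$. I define the truncated kinetic function
\begin{equation*}
H(x,v):=\varphi(x)\,\phi\bigl(v-\tilde u(x)\bigr)\,h(x,v),
\end{equation*}
which is non-negative because $h=f\ge 0$ on $\{v>\tilde u(x)\}$. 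Since $F(x,v):=\varphi(x)\phi(v-\tilde u(x))$ is Lipschitz, the distributional identity $F\,\partial_v\mu=\partial_v(F\mu)-(\partial_vF)\mu$ combined with $\overline a(v)\cdot\nabla h=\partial_v\mu$ yields
\begin{equation*}
\overline a(v)\cdot\nabla_x H \;=\; \partial_v(\varphi\phi\,\mu)\;-\;\varphi\,\phi'(v-\tilde u)\,\mu\;+\;h\phi\,\overline a(v)\cdot\nabla\varphi\;-\;\varphi\,h\,\phi'(v-\tilde u)\,\overline a(v)\cdot\nabla\tilde u,
\end{equation*}
which has the form $\partial_v\mu_1+\mu_0+g$ required by Theorem~\ref{thm:averaging}. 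Writing $E:=\int_{B_R}\int_{\tilde u+\ell_1}^{\Lambda} h\,dv\,dx$, Lemma~\ref{lem:variation} applied with the Lipschitz function $w(x)=\min(\tilde u(x)+\ell_1,\Lambda)$ bounds $\mu\bigl(B_{(r+R)/2}\times\{v>\tilde u+\ell_1\}\bigr)\le C(\Gamma+1/(R-r))E$. Combining this with $\abs{\phi'}\le C/(\ell_2-\ell_1)$ and $\abs{\nabla\tilde u}\le\Gamma$, routine bookkeeping yields
\begin{equation*}
\norm{\mu_1}_{TV}+\norm{\mu_0}_{TV}+\norm{g}_{L^1}\;\le\; C(\Lambda)\,\frac{\Gamma+1/(R-r)}{\ell_2-\ell_1}\,E,
\end{equation*}
while $\abs{H}\le 1$, $H\ge 0$, and $\int H\,dx\,dv\le E$ together give $\norm{H}_{L^2}^2\le E$.

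Applying Theorem~\ref{thm:averaging} to $H$ with $\psi\equiv 1$, for any $\theta$ in the open interval $(0,\alpha/(\alpha+4))$ I obtain
\begin{equation*}
\Bigl\|\int H\,dv\Bigr\|_{W^{\theta,\,2/(1+\theta)}(\R^{n+1})} \;\lesssim\; E^{(1-\theta)/2}\cdot\Bigl(\tfrac{\Gamma+1/(R-r)}{\ell_2-\ell_1}\Bigr)^{\theta}E^{\theta}\;=\;\Bigl(\tfrac{\Gamma+1/(R-r)}{\ell_2-\ell_1}\Bigr)^{\theta}E^{(1+\theta)/2}.
\end{equation*}
Sobolev embedding in $\R^{n+1}$ upgrades the left-hand side to an $L^q$ bound with $1/q=(1+\theta)/2-\theta/(n+1)$. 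Writing $U_{\ell_2}(x):=\int_{\tilde u(x)+\ell_2}^{\Lambda}h(x,v)\,dv$, the pointwise inequality $\int H(x,v)\,dv\ge U_{\ell_2}(x)$ on $B_r$ (from $\varphi\equiv 1$ there, $\phi\equiv 1$ above $v=\tilde u+\ell_2$, and $h\ge 0$ above $v=\tilde u$) in particular gives $B_r\cap\{U_{\ell_2}>0\}\subset\{\int H\,dv>0\}$, and H\"older's inequality on this set yields
\begin{equation*}
\int_{B_r} U_{\ell_2}\,dx \;\le\; \abs{B_r\cap\{U_{\ell_2}>0\}}^{1-1/q}\,\Bigl\|\int H\,dv\Bigr\|_{L^q(\R^{n+1})}.
\end{equation*}
Combining these yields \eqref{eqn:energy1} with exponent $\beta=1-1/q=(1-\theta)/2+\theta/(n+1)$; letting $\theta\nearrow\alpha/(\alpha+4)$ then covers every admissible $\beta$ in the stated open range.

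The main technical point is the derivation of the transport equation for $H$. The $x$-dependence of the threshold $v=\tilde u(x)+\ell_1$ produces the bulk term $-\varphi h\phi'(v-\tilde u)\,\overline a\cdot\nabla\tilde u$ in $g$, which is the place where $\Gamma$ enters the final bound; this term vanishes identically in Silvestre's constant-background setting and is precisely the obstruction that forces the Lipschitz hypothesis $\norm{\nabla\tilde u}_{L^\infty}\le\Gamma$. A minor wrinkle, handled exactly as in Lemma~\ref{lem:variation}, is that the above formal distributional manipulations against the a priori rough Borel measure $\mu$ must first be justified by smoothing $\phi$ near the threshold and passing to the limit via dominated convergence for $\mu$.
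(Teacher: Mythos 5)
Your proof is correct and follows essentially the same route as the paper: localize $h$ with a cutoff adapted to $\{v>\tilde u(x)+\ell\}$, bound the resulting source terms $\partial_v\mu_1+\mu_0+g$ via Lemma \ref{lem:variation}, apply the averaging lemma, and conclude by Sobolev embedding and H\"older on the level set. If anything, your explicit product cutoff $\varphi(x)\phi(v-\tilde u(x))$ is slightly more careful than the paper's, since it makes visible the term $\varphi\,h\,\phi'(v-\tilde u)\,\overline a\cdot\nabla\tilde u$ of size $\Gamma/(\ell_2-\ell_1)$ coming from the $x$-dependence of the threshold, which the paper's stated requirement $|\nabla_x\psi|\le C(R-r^*)^{-1}$ glosses over but which is harmlessly absorbed into the factor $(\Gamma+1/(R-r))^\theta(\ell_2-\ell_1)^{-\theta}$.
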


\begin{proof}
Set $r^* = (R + r)/2$ and define $\psi\in C^\infty_c(B_2\times \R^+)$ a cut-off function satisfying  
\begin{equation}
\begin{cases}
\psi(x,v) = 1 &\text{if } v > \tilde{u}(x) + \ell_2 \text{ and } x\in B_{r^*}\\
\psi(x,v) = 0 &\text{if }v < \tilde{u}(x) + \ell_1 \text{ or } x\in B_2 \setminus B_R\\
0 \le \psi(x,v) \le 1 &\text{for each }x\in B_2, \ v\in \R^+\\
|\partial_v \psi | \le C(\ell_2 - \ell_1)^{-1} &\text{for each }x\in B_2, \ v\in\R^+.\\
|\nabla_x \psi| \le C(R-r^*)^{-1} &\text{for each }x\in B_2, \ v \in\R^+.
\end{cases}
\end{equation}
Then, we localize $h$ by considering the equation for $\tilde h := \psi(x,v)h(x,v)$, namely,
\begin{equation}
\overline{a}(v)\cdot \nabla_x\tilde h = \partial_v \mu_1 + \mu_0 + H,
\end{equation}
where
\begin{equation}
\mu_1 := \psi(x,v)\mu(x,v), \quad \mu_0 := -\partial_v\psi(x,v)\mu(x,v), \quad \text{and} \quad H := h(x,v)\overline{a}(v)\cdot \nabla_x \psi.
\end{equation}
Applying Theorem \ref{thm:averaging} to $\tilde h$, we conclude
\begin{equation}\label{eqn:average_estimate}
\left\|\int_0^{\Lambda} \tilde h \;dv\right\|_{W^{s,p}(\R^{n+1})} \le C(C_0, C_1, \alpha, s, n)\|\tilde h\|_{L^2(\R^{n+2})}^{1-\theta}\left(\|\mu_1\|_{TV} + \|\mu_0\|_{TV} + \|H\|_{L^1(\R^{n+2})}\right)^{\theta},
\end{equation}
where 
$$\theta = \frac{\alpha}{\alpha + 4}, \qquad p = \frac{1 + \theta}{2}, \qquad \text{and} \qquad 0 < s < \theta.$$
Next, we bound each term on the right individually. First, we see $h$ is $\{0,1\}$-valued on the set $\set{v > \tilde u}$ so that
\begin{equation}
\|\tilde h\|_{L^2(\R^{n+2})} \le \left(\int_{B_R}\int_{\tilde{u}(x) + \ell_1}^{\Lambda} h(x,v) \;dvdx\right)^{\frac{1}{2}}.
\end{equation}
Second, we bound $H$ as
\begin{equation}
\|H\|_{L^1} = \int_{B_R}\int_{\tilde u(x) +\ell_1}^{\Lambda} |h(x,v)\overline{a}(v)\cdot \nabla_x \psi(x,v)| \;dvdx \le \frac{C}{R - r^*}\int_{B_R}\int_{\tilde u(x) + \ell_1}^{\Lambda} h(x,v) \;dvdx.
\end{equation}
Third, we bound $\mu_i$ using Lemma \ref{lem:variation} as
\begin{equation}
\begin{aligned}
\|\mu_i\|_{TV} &\le \frac{C}{(\ell_2 - \ell_1)^i}\mu\left( \set{(x,v) \mid x \in B_{r^*}, \ v\in (\tilde u(x) + \ell_1, \Lambda)}\right)\\
	& \le \frac{C}{(\ell_2 - \ell_1)^i}\left(\Gamma + \frac{1}{r^*-r}\right)\int_{B_{r^*}}\int_{\tilde u(x) +\ell_1}^{\Lambda} h(x,v) \;dvdx.
\end{aligned}
\end{equation}
Finally, we lower bound the left hand side of \eqref{eqn:average_estimate} using the Sobolev embedding $W^{s,p}(\R^{n+1}) \embeds L^q(\R^{n+1})$ and H\"older's inequality to obtain
\begin{equation}
\begin{aligned}
\norm{\int_0^{\Lambda}\tilde h(x,v) \;dv}_{W^{s,p}(\R^{n+1})} &\ge \norm{\int_0^{\Lambda} \tilde h(x,v) \;dv}_{L^q(\R^{n+1})}\\
	& \ge \norm{\int_{\tilde{u}(x) + \ell_2}^{\Lambda} h(x,v) \;dv}_{L^q(B_r)}\\
	&\ge \left(\int_{\tilde{u}(x) + \ell_2}^{\Lambda} h(x,v) \;dvdx\right)\abs{\set{ x\in B_r \ \bigg | \ \int_{\tilde{u}(x) + \ell_2}^{\Lambda} h(x,v)\;dv > 0}}^{\frac{-1}{q^\prime}},
\end{aligned}
\end{equation}
Rearranging terms, yields
\begin{equation}
\int_{B_r}\int_{\tilde{u} + \ell_2}^{\Lambda} h \;dvdx \le \frac{C}{(\ell_2 - \ell_1)^\theta}\left(\Gamma + \frac{1}{R-r}\right)^\theta\left(\int_{B_R}\int_{\tilde{u} + \ell_1}^{\Lambda} h \;dvdx\right)^{\frac{1 + \theta}{2}}\abs{B_r \cap \set{\int_{\tilde{u}(x) + \ell_1}^{\Lambda} h(x,v) \;dv > 0}}^{\frac{1}{q^\prime}}
\end{equation}
where 
\begin{equation}
\theta = \frac{\alpha}{\alpha + 4} \quad\text{and}\quad p = \frac{1 + \theta}{2} \quad\text{and}\quad q = \frac{p(n+1)}{n+1 - ps} \quad\text{and}\quad q^\prime = \frac{q}{q-1} \quad \text{and}\quad 0 < s < \theta.
\end{equation}
Noticing that as $s$ approaches $\theta$, $1/q^\prime = (1-\theta)/2 + s/(n+1)$ tends to $(1 - \theta)/2 + \theta/(n+1)$ completes the proof of \eqref{eqn:energy1}. The proof of \eqref{eqn:energy2} is identical up to using \eqref{eqn:bound2} instead of \eqref{eqn:bound1} to bound the variation of $\mu_i$. 
\end{proof}

We are now ready to prove Theorem \ref{thm:stability} at the unit scale, still in the time-independent setting of \eqref{eqn:scalar2}: 
\begin{lemma}[De Giorgi Iteration]\label{lem:DeGiorgi}
Suppose $u$ and $\tilde{u}$ are bounded entropy solutions to \eqref{eqn:scalar2} on $B_2\subset \R^{n+1}$ with $0 \le u, \tilde u \le \Lambda$ on $B_2$ and $\tilde u$ Lipschitz with $\|\tilde u\|_{Lip(B_2)} \le \Gamma$. 
There are $\gamma = \gamma(\alpha,n) \in (0,1)$, $\eta \in (0,\alpha/2)$,  and $\tilde C = \tilde C(C_0, C_1, \alpha, n, \Lambda) > 0$ such that 
\begin{align}
&\| [u - \tilde u]_+\|_{L^{\infty}(B_1)} \le \tilde C(1 + \Gamma)^{\eta}\| [u - \tilde u]_+\|_{L^1(B_2)}^\gamma\\
&\| [\tilde u - u]_+\|_{L^{\infty}(B_1)} \le \tilde C(1 + \Gamma)^{\eta}\| [\tilde u - u]_+\|_{L^1(B_2)}^\gamma
\end{align}
\end{lemma}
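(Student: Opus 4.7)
The plan is to run a standard De~Giorgi iteration on the kinetic level-set quantity
\[
E_k(r) := \int_{B_r}\int_{\tilde u(x)+k}^{\Lambda} h(x,v)\,dv\,dx = \int_{B_r}[u - \tilde u - k]_+\,dx,
\]
exploiting the energy estimate \eqref{eqn:energy1}. The key observation is that, although $[u-\tilde u]_+$ is no longer a subsolution of a scalar conservation law, the quantity $E_k$ still satisfies the self-improving inequality provided by Lemma~\ref{lem:energy}. I will carry out the argument for the first bound; the second is identical, using \eqref{eqn:energy2} in place of \eqref{eqn:energy1}.

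Fix a parameter $K>0$ to be chosen and define the decreasing sequences
\[
r_k = 1 + 2^{-k}, \qquad \ell_k = K(1 - 2^{-k}), \qquad U_k = E_{\ell_k}(r_k).
\]
Applying Lemma~\ref{lem:energy} with $r=r_{k+1}$, $R=r_k$, $\ell_1=\ell_k$, $\ell_2=\ell_{k+1}$ yields, after using $\ell_{k+1}-\ell_k = K 2^{-(k+1)}$ and $R-r=2^{-(k+1)}$ together with $(\Gamma + 2^{k+1})^\theta \le C(1+\Gamma)^\theta 2^{\theta(k+1)}$, the bound
\[
U_{k+1} \le \frac{C(1+\Gamma)^\theta\, 2^{2\theta(k+1)}}{K^\theta}\, U_k^{(1+\theta)/2}\,\bigl|A_{k+1}\bigr|^\beta,
\]
where $A_{k+1} := B_{r_{k+1}}\cap\{[u-\tilde u - \ell_{k+1}]_+>0\}$. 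A Chebyshev inequality on $A_{k+1}$, using that $[u-\tilde u-\ell_k]_+ > \ell_{k+1}-\ell_k$ there, gives $|A_{k+1}| \le 2^{k+1} U_k / K$. Inserting this yields the crucial recursion
\[
U_{k+1} \le \frac{C(1+\Gamma)^\theta}{K^{\theta+\beta}}\, B^{k} \, U_k^{1+\delta},
\qquad \delta := \tfrac{\theta-1}{2}+\beta,
\]
with $B$ a fixed constant depending only on $\theta,\beta$. By choosing $\beta$ in the admissible window $(1-\theta)/2 < \beta < (1-\theta)/2 + \theta/(n+1)$, one has $\delta>0$.

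Now invoke the standard nonlinear iteration lemma: if $U_{k+1}\le C_0 B^k U_k^{1+\delta}$ and $U_0 \le C_0^{-1/\delta} B^{-1/\delta^2}$, then $U_k\to 0$. Plugging in $C_0 = C(1+\Gamma)^\theta K^{-(\theta+\beta)}$, the smallness condition on $U_0 = \|[u-\tilde u]_+\|_{L^1(B_2)}$ is achieved by choosing
\[
K = \tilde C\,(1+\Gamma)^\eta\,\|[u-\tilde u]_+\|_{L^1(B_2)}^{\gamma}, \qquad \gamma = \frac{\delta}{\theta+\beta},\qquad \eta = \frac{\theta}{\theta+\beta}.
\]
Convergence $U_k \to 0$ then forces $[u-\tilde u - K]_+ = 0$ a.e.\ on $B_1$, which is the desired $L^\infty$ bound. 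A direct computation shows $\gamma\in(0,1)$ and, since $\beta > (1-\theta)/2$, one has $\eta < 2\theta/(1+\theta) = \alpha/(\alpha+2) < \alpha/2$, matching the exponents claimed in the statement. The second inequality for $[\tilde u - u]_+$ follows by running the same scheme using \eqref{eqn:energy2}.

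The only real subtlety is the coordinated choice of the free parameter $\beta$: it must lie strictly above $(1-\theta)/2$ to make the exponent $1+\delta$ genuinely super-linear (so the iteration contracts), while simultaneously lying strictly below $(1-\theta)/2 + \theta/(n+1)$ to remain in the range allowed by Lemma~\ref{lem:energy}. Once this window is secured, the remaining arithmetic — propagating the factor $(1+\Gamma)^\theta$ through the iteration to obtain $(1+\Gamma)^\eta$ outside, and verifying $\eta<\alpha/2$ — is routine and presents no further obstacle.
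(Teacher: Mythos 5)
Your proposal is correct and follows essentially the same route as the paper: the identical iteration quantities $r_k$, $\ell_k$, $U_k$, the same application of Lemma \ref{lem:energy} combined with Chebyshev to close the recursion with exponent $1+\delta$, and the same choice $K = \tilde C(1+\Gamma)^\eta U_0^\gamma$ with $\gamma = \delta/(\theta+\beta)$ and $\eta = \theta/(\theta+\beta)$. Your explicit verification that $\eta < 2\theta/(1+\theta) = \alpha/(\alpha+2) < \alpha/2$ is a welcome detail the paper only asserts.
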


\begin{proof}
Let us set up the iteration quantities. For $0 < K \le \Lambda$, we define
\begin{equation}
r_k = 1 + 2^{-k} \quad \text{and} \quad B_k = B_{r_k} \quad \text{and} \quad \ell_k = K(1 - 2^{-k}) \quad \text{and} \quad A_k = \int_{B_k}\int_{\tilde u(x) + \ell_k} h(x,v) \;dvdx.
\end{equation}
Now, directly applying Lemma \ref{lem:energy} with $r = r_{k+1}$ and $R = r_k$ and some $0 < \beta = (1 - \theta)/2 + \delta$ where $0 < \delta < \theta/(n+1)$, we obtain the recurrence
\begin{equation}
A_{k+1} \le \frac{C(\Lambda,C_0, C_1, \alpha, n)}{(\ell_{k+1} - \ell_k)^\theta}\left(\Gamma + \frac{1}{r_k - r_{k+1}}\right)^\theta A_k^{\frac{1 + \theta}{2}}\abs{B_{k+1} \cap \set{\int_{\tilde u(x) + \ell_{k+1}}^{\Lambda} h(x,v) \;dv > 0}}^\beta.
\end{equation}
It remains to bound the measure of the set appearing on the right hand side. To this end, suppose $x\in B_{k+1}$ with $\int_{\tilde u(x) + \ell_{k+1}}^{\Lambda} h(x,v)\;dv > 0$. Then, $u(x) > \tilde u(x) + \ell_{k+1}$. Thus, $h(x,v) = 1$ for $v\in (\tilde u(x),\ell_{k+1})$. We conclude
$$\int_{\tilde u(x) + \ell_{k}}^{\Lambda} h(x,v) \;dv > \int_{\tilde u(x) + \ell_{k}}^{\tilde u(x) + \ell_{k+1}} 1 \;dv = \ell_{k+1} - \ell_k = K 2^{-k-1}.$$
Thus, applying Chebychev's inequality,
\begin{equation}
\abs{B_{k+1} \cap \set{\int_{\tilde u(x) + \ell_{k+1}}^{\Lambda} h(x,v) \;dv > 0}} = \abs{B_{k+1} \cap \set{\int_{\tilde u(x) + \ell_{k}}^{\Lambda} h(x,v) \;dv > K2^{-k-1}}} \le K^{-1}2^{k+1}A_k
\end{equation}
and we find
\begin{equation}\label{eqn:recurrence}
A_{k+1} \le \frac{C2^{(2\theta + \beta)(k+1)}}{K^{\beta + \theta}}\left(1 + \Gamma\right)^\theta A_k^{1 + \delta}.
\end{equation}
Rearranging terms, setting $\gamma^{-1} := (\beta + \theta)/\delta \in (0,1)$, and $\eta := \theta/(\beta + \theta) \in (0, \alpha/2)$, we find
\begin{equation}
\frac{(1+\Gamma)^{\theta/\delta}A_{k+1}}{K^{1/\gamma}} \le C2^{(2\theta + \beta)(k+1)}\left(\frac{(1 + \Gamma)^{\theta/\delta}A_k}{K^{1/\gamma}}\right)^{1 + \delta}.
\end{equation}
Because $1 + \delta > 1$, $A_k$ decays super-exponentially and $A_\infty = \lim_{k\rightarrow \infty} A_k = 0$, provided $(1+\Gamma)^{\theta/\delta}A_0/K^{1/\gamma}$ is sufficiently small. More precisely, there is a $\tilde C = \tilde C(\Lambda, C_0, C_1, \alpha, n)$ such that if $(1+\Gamma)^{\theta/\delta}A_0/K^{1/\gamma} \le \tilde C^{1/\gamma}$, then $A_\infty = 0$.
Picking $K =\tilde C(1+\Gamma)^{\eta}A_0^{\gamma}$, yields
$$0 = A_\infty = \int_{B_1}\int_{\tilde u(x) + K}^{\Lambda} h(x,v)\;dv$$
and so
$$u(x) \le \tilde u(x) + K \le \tilde u(x) + \tilde C \left(1 + \Gamma\right)^\eta \left(\int_{B_2}\int_{\tilde u(x)}^{\Lambda} h(x,v)\;dv\right)^\gamma.$$
Rearranging terms gives the desired estimate on $[u - \tilde u]_+$. The proof of the lower bound is identical using the other energy estimate from Lemma \ref{lem:energy} and similar reasoning to arrive at a recurrence like \eqref{eqn:recurrence}.
\end{proof}

\begin{remark}
For time dependent equations like \eqref{eqn:scalar}, there is a order of causality and one can show the more precise estimate
\begin{equation}
    \norm{u - \tilde u}_{L^\infty([-1,0]\times B_1)} \le C\left(1 + \norm{\tilde u}_{Lip([-2,0]\times B_2)}\right)^\eta\norm{u - \tilde u}_{L^1([-2,0]\times B_2)}^\gamma.
\end{equation}
This follows from a standard modification of the De Giorgi techniques applied in this section, where one modifies the cut-offs in space and time.
\end{remark}

\begin{flushleft}

\bf{\underline{Proof of Theorem \ref{thm:stability}}}

\end{flushleft}

We will now show how Lemma \ref{lem:DeGiorgi} implies the full Theorem \ref{thm:stability}. Take $0 < s < t < T$,
suppose $\Omega$ is an a bounded open subset of $\R^n$ with $\tilde \Omega$ compactly contained in $\Omega$, and let $u$ and $\tilde u$ be bounded entropy solutions to \eqref{eqn:scalar} on $[0,T]\times \Omega$ as in the statement of Theorem \ref{thm:stability}. We will now analyze $u - \tilde u$ pointwise using Lemma \ref{lem:DeGiorgi}.

Let us first assume both $u, \tilde u$ are non-negative with $L^\infty$ bound $\Lambda$ on $[s,T]\times \Omega$. Fix $x \in \widetilde \Omega$ and $t < t_* < T$. Let $d = \min(t - s,dist(\widetilde\Omega,\bd\Omega))$ Then, define $w, \tilde w:[-2,0]\times B_2 \rightarrow \R$ via
\begin{equation}
w(\tau, y) = u(t_* + \tau d/2, x + yd/2) \qquad \text{and} \qquad \tilde w(y) = \tilde u(t_* + \tau d/2, x + yd/2).
\end{equation}
By scaling, $w, \tilde w$ are solutions to \eqref{eqn:scalar} in the $(\tau, y)$ variables. Also,
\begin{equation}
    \|w\|_{L^\infty([-2,0]\times B_2)} \le \Lambda \quad \text{and} \quad \|\tilde w\|_{L^\infty([-2,0]\times B_2)} \le \Lambda \quad \text{and} \quad \|\tilde w\|_{Lip([-2,0]\times B_2)} \le \frac{d\Gamma}{2}.
\end{equation}
Applying Lemma \ref{lem:DeGiorgi} to $w$ and $\tilde w$, we find
\begin{equation}
\begin{aligned}
|u(t_*,x) - \tilde u(t_*,x)| \le \|w - \tilde w\|_{L^\infty([-1,0]\times B_1)} &\le C(\Lambda, C_0, C_1, n, \alpha)(1 + d\Gamma)^{\eta}\|w - \tilde w\|_{L^1([-2,0]\times B_2)}^\gamma\\
    &\le \frac{C(1 + d\Gamma)^{\eta}}{d^{\gamma(n+1)}}\|u - \tilde u\|_{L^1([s,T]\times \Omega)}^\gamma
\end{aligned}
\end{equation}
Thus, taking a supremum over $t_* \in [s,T]$ and $x\in \tilde \Omega$ proves Theorem \ref{thm:stability} when $u, \; \tilde u \ge 0$.

Finally, the general case reduces to when both $u,\;\tilde u$ are non-negative. Indeed, defining $$m = \min\left(\essinf_{t\in[s,T],\;x\in \Omega} u(t,x), \essinf_{t\in[s,T]\;x\in \Omega}\tilde u(t,x)\right),$$ both $u - m$ and $\tilde u - m$ are non-negative and we can invoke the preceding part on $u - m$ and $\tilde u - m$.




\section{Decay Rates}\label{sec:applications}

In this section, we show how to use Theorem \ref{thm:stability} to obtain explicit rates of convergence for perturbations of global-in-time Lipschitz solutions. In particular, this result applies to planar rarefaction waves, but is more general.

\begin{flushleft}
\bf{\underline{Proof of Corollary \ref{cor:rate}}}
\end{flushleft}

Fix two global-in-time entropy solutions $u$ and $\tilde u$ to \eqref{eqn:scalar} as in the statement of Corollary \ref{cor:rate} with initial data $u_0$ and $\tilde u_0$, respectively. Namely, take $\tilde u$ a global-in-time Lipschitz entropy solution and $\|u_0\|_{L^\infty},\; \|\tilde u_0\|_{L^\infty} \le \Lambda$ and $u_0 - \tilde u_0 \in L^1(\R^n)$.
Then, the standard maximum principle and $L^1$-contraction estimate for scalar conservation laws (for a precise reference see \cite[Theorem 6.2.3]{dafermos_book}) imply that for each $t \in \R^+$,
\begin{equation}
\|u(t)\|_{L^\infty(\R^n)} \le \Lambda, \qquad \|\tilde u(t)\|_{L^\infty(\R^n)} \le \Lambda, \qquad \text{and} \qquad \norm{u(t) - \tilde u(t)}_{L^1(\R^n)} \le \norm{u_0 - \tilde u_0}_{L^1(\R^n)}.
\end{equation}
Now, fix $t \in \R^+$ and $x\in \R^n$ and apply Theorem \ref{thm:stability} with $T = 2t$,  $s = t/2$, $\Omega = B_{t}(x)$ and $\widetilde \Omega = B_{t/2}(x)$:
\begin{equation}
    |u(t,x) - \tilde u(t,x)| \le \frac{C(\Lambda, C_0, C_1, \alpha, n)(1 + d\Gamma)^{\eta}}{d^{\gamma(n+1)}}\norm{u - \tilde u}_{L^1([s,T]\times B_t(x))}^{\gamma},
\end{equation}
where $d = \min(t-s,dist(\bd\Omega,\widetilde\Omega)) = t/2$ and $\Gamma = \|\nabla_{t,x}\tilde u\|_{L^\infty([t/2,T]\times B_t(x))}$. Thus, we have
\begin{equation}
    \abs{u(t,x) - \tilde u(t,x)} \le \frac{C\left(1 + t\sup_{t/2 < \tau < 2t}\norm{\nabla_{t,x}\tilde u(\tau)}_{L^\infty(\R^n)}\right)^{\eta}}{t^{\gamma n}}\norm{u_0 - \tilde u_0}_{L^1(\R^n)}^{\gamma}.
\end{equation}
Finally, using $\tilde u$ is Lipschitz, $\tilde u$ satisfies \eqref{eqn:scalar} pointwise. One can readily show $\abs{\partial_t \tilde u} \le C(C_1, \Lambda)\abs{\nabla \tilde u}$, which completes the proof of Corollary \ref{cor:rate}.



\bibliographystyle{plain}
\bibliography{bibliography}

\end{document}